\newtheorem{theorem}{Theorem}[section]
\newtheorem{lemma}[theorem]{Lemma}
\newtheorem{corollary}[theorem]{Corollary}
\theoremstyle{definition}
\newtheorem{definition}[theorem]{Definition}
\numberwithin{equation}{section}
\begin{document}
	\setcounter{page}{1}		
	\vspace*{2.0cm}
	\title[Embeddings of anisotropic Sobolev spaces]
	{Embeddings of anisotropic Sobolev spaces into spaces of anisotropic Hölder-continuous functions}
	\author[N. Chems Eddine,  
		D.D. Repov\v{s} 
	]{\bf
		Nabil Chems Eddine$^{1}$, 	
			Du\v{s}an D. Repov\v{s}$^{2,*}$
	}
	\maketitle
	\vspace*{-0.6cm}	
	\begin{center}
		{\footnotesize
		$^1$Laboratory of Mathematical Analysis and Applications, 
    	Department of Mathematics, 
		Faculty of Sciences, 
		Mohammed V University, 
		P.O. Box 1014, Rabat, Morocco. 
		E-mail: {\it nabil.chemseddine@um5r.ac.ma}\newline			
		$^2$Faculty of Education and Faculty of Mathematics and Physics, 
		University of Ljubljana 
		\& Institute of Mathematics, Physics and Mechanics, 
		1000 Ljubljana, Slovenia. 
		Email:  {\it dusan.repovs@guest.arnes.si}			
		}
	\end{center}		
	\vskip 4mm {\footnotesize \noindent {\it  Abstract.}		
 	We introduce a novel framework for embedding anisotropic variable exponent Sobolev spaces into spaces of anisotropic variable exponent Hölder-continuous functions within rectangular domains. We establish a foundational approach to extend the concept of Hölder continuity to anisotropic settings with variable exponents, providing deeper insight into the regularity of functions across different directions. Our results not only broaden the understanding of anisotropic function spaces but also open new avenues for applications in mathematical and applied sciences. \\
		
		\noindent {\it Keywords:.}
		Anisotropic variable exponent Sobolev spaces, critical Sobolev exponent, Sobolev embeddings, continuous embedding, anisotropic Hölder-continuous functions.\\
		
		\noindent {\it Mathematics Subject Classification (2020):}
		46E35, 46E15.}	
	
	\renewcommand{\thefootnote}{}
	\footnotetext{$^*$Corresponding author 	\par}
	
	% \date{\today}
	%  \tableofcontents
	
	\section{Introduction} \label{s1}
	
	The broadening of classical Sobolev embedding theories, encompassing both isotropic and anisotropic frameworks, as well as constant or variable exponents, has garnered considerable interest among researchers. These theories play a pivotal role in the analysis of partial differential equations linked to Sobolev spaces. This significance has prompted extensive investigation, as evidenced by studies referenced in the literature 
	\cite{Acerbi0}-- \cite{Troisi2}.	For  $\Omega \subset \mathbb{R}^N$ ($N\geq2$) rectangular domain, the anisotropic Sobolev space  $W^{1,\overrightarrow{p}(x)}(\Omega)$  consists of functions
	$u\in  L^{p_M(x)}(\Omega)$ with first-order distributional derivatives in  $L^{p_i(x)}(\Omega)$, i.e.,
	\begin{align*}
		\displaystyle
		W^{1,\overrightarrow{p}(x)}(\Omega)&=\{u\in L^{p_M(x)}(\Omega):
		\partial_{x_i}u\in L^{p_i(x)}(\Omega)\text{ for every } i=1,\dots,N\},
	\end{align*}	
	where  $\overrightarrow{p}:\overline\Omega\to\mathbb{R}^N$ is a vector function defined as
	$
	\overrightarrow{p}(x)=\left(p_1(x), \dots,p_N(x)\right)
	$
	with  $p_i \in C_+(\overline\Omega)$, satisfying $\displaystyle 1< p_i^-:= \text{ess inf}_{x\in \Omega}p_i(x)\leq p_i(x)\leq p_i^+:= \text{ess sup}_{x\in \Omega}p_i(x)<\infty$
	for each $i\in\{1,\dots,N\}$, and 
	$
	\displaystyle p_m(x)=\min_{1\leq i \leq N}\{p_i(x)\},\   \displaystyle p_M(x)=\max_{1\leq i \leq N}\{p_i\}$,
	$ \displaystyle p^+_m= \text{ess sup}_{x\in \Omega}p_m(x),$ and $\displaystyle p^+_M= \text{ess sup}_{x\in \Omega}p_M(x).$\par 
	This space is equipped with the following norm:
	$$
	\|u\|_{W^{1,\overrightarrow{p}(x)}(\Omega)}
	:=\|u\|_{L^{p_M(x)}(\Omega)}
	+\sum_{i=1}^N\left\|\partial_{x_i}u \right\|_{L^{p_i(x)}(\Omega)},
	$$
	
	For	$\overrightarrow{p}(\cdot)\in ( C_{+}^{\log}(\overline{\Omega}))^{N} $ such that $\displaystyle 1<p_m(x):=\min_{1\leq i \leq N}\{p_i(x)\}\leq p_M(x):=\max_{1\leq i \leq N}\{p_i(x)\}< \overline{P}^{\ast}(x) $ and $\overline{P}(x)<N,$ for every $x\in \overline{\Omega}$, where $\overline{P}^{\ast}({\color{red}\cdot})$ is the critical Sobolev exponent given by
	$$
	\displaystyle\overline{P}^{\ast}(x):=\begin{cases}
		\frac{N\overline{P}(x)}{N-\overline{P}(x)}&\text{if } \overline{P}(x)<N, \\
		+\infty &\text{if } \overline{P}(x)\geq N,
	\end{cases}
	$$
	where $\displaystyle\overline{P}(x):=\frac{N}{\sum_{i=1}^{N} \frac{1}{p_i(x)}}$ represents the harmonic mean of $\{p_i(x)\}$ and  $C_{+}^{\text{log}}(\overline{\Omega})$ denotes the set of functions $p\in C_{+}(\overline{\Omega})$ that satisfy the log-Hölder continuity condition
	$$\sup \left\{ |p(x)-p(y)| \log \frac{1}{|x-y|} : x,y\in \overline{\Omega}, 0< |x-y|<\frac{1}{2} \right\} < \infty.$$
	
       We highlight the most significant contributions to the understanding of this space by Fan \cite{fananiso}, who has obtained several key results, including a density theorem and embedding theorems for subcritical exponents. Specifically, for every $h \in C(\Omega)$ with $1 \leq h(x) < \overline{P}^{\ast}(x),$ for every $x \in \overline{\Omega}$, the space $W^{1,\overrightarrow{p}(x)}(\Omega)$ is continuously and compactly embedded into $L^{h(x)}(\Omega)$ (see, e.g., Fan \cite[Theorem 2.5]{fananiso}). \par 
	For $p_i(x)=N,$  $1\leq i\leq N,$ and  $x\in \Omega$, the space $W^{1,N}(\Omega)$ is continuously and compactly embedded into $L^{ h(x) }(\Omega),$ for every $h\in C(\Omega)$ with $1\leq h(x)< \infty,$  $x\in \overline{\Omega}$, but in general, $W^{1,N}(\Omega)\not\subseteq L^{\infty }(\Omega)$ (see, e.g., Ha\v{s}kovec and Schmeiser \cite{HaskovecSchmeiser}).\par 
	
	Finally, for $\overline{P}(x)>N,$ $x\in \overline{\Omega}$, Fan \cite{fananiso} proved that there exists $\beta \in (0,1)$ such that $W^{1,\overrightarrow{p}(x)}(\Omega)$ is continuously embedded into the space of $\beta$-Hölder-continuous functions:
	
	$$W^{1,\overrightarrow{p}(x)}(\Omega)\hookrightarrow C^{0,\beta}(\overline{\Omega}),$$
	
	and consequently, $W^{1,\overrightarrow{p}(x)}(\Omega)\hookrightarrow C^{0}(\overline{\Omega}).$	

	Inspired by ideas used in Morrey \cite{Morrey1,Morrey2} and R{\'a}kosn{\'\i}k \cite{Rakosnik,Rakosnik2}, we aim to extend in this work the results mentioned above - by proving that the anisotropic Sobolev space $W^{1,\overrightarrow{p}(\cdot)}(\Omega)$ is embedded in the space of $\overrightarrow{\beta}(\cdot)$-Hölder-continuous functions (as defined in Definition \ref{def1}). 		The following is the main result of this paper:
	
	\begin{theorem} \label{theo}
		Let $\Omega$ be a rectangular domain in $\mathbb{R}^N$ and suppose 
		that
		$\overrightarrow{p}(\cdot)\in ( C_{+}^{}(\overline{\Omega}))^{N} $ 
		is
		such that $\displaystyle N <p_m(x):=\min_{1\leq i \leq N}\{p_i(x)\}\leq p_M(x):=\max_{1\leq i \leq N}\{p_i(x)\}< \infty,$ for every $x\in \overline{\Omega}$. Set $\overrightarrow{\beta}(\cdot)\in ( C_{+}^{}(\overline{\Omega}))^{N} $ with $\beta_i$ satisfying \eqref{eqlem2}. Then there exists a continuous embedding 
		\[
		W^{1,\overrightarrow{p}(x)}(\Omega)\hookrightarrow C^{0, \overrightarrow{\beta}(x)}(\overline{\Omega}).
		\] 
	\end{theorem}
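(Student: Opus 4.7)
The plan is to prove the embedding by a Morrey-type argument tailored to the anisotropic variable exponent setting, exploiting in an essential way the rectangular geometry of $\Omega$ and the assumption $p_m^- > N$.

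The first step is to reduce the problem to axis-parallel increments. For fixed $x,y \in \overline{\Omega}$, I would introduce the intermediate points $z^{(0)} = x$ and $z^{(i)} = (y_1, \dots, y_i, x_{i+1}, \dots, x_N)$ for $i = 1, \dots, N$, so that $z^{(i-1)}$ and $z^{(i)}$ differ only in the $i$-th coordinate. The telescoping identity
\[
u(y) - u(x) \;=\; \sum_{i=1}^N \bigl( u(z^{(i)}) - u(z^{(i-1)}) \bigr)
\]
reduces matters to bounding each one-directional increment $|u(z^{(i)}) - u(z^{(i-1)})|$ by an expression of the form $C\,|y_i - x_i|^{\beta_i(x)}\,\|u\|_{W^{1,\overrightarrow{p}(\cdot)}(\Omega)}$. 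Summing and using $|y_i - x_i| \le |y - x|$ will then yield the desired anisotropic Hölder bound.

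Next, for each fixed direction $i$, I would establish the one-directional Morrey estimate by an averaging argument. Since a single line has measure zero, the fundamental theorem of calculus in direction $i$ cannot be applied directly; instead I would choose a perpendicular rectangle $R_i$ with side lengths $r_j$ ($j \neq i$) and write
\[
u(z^{(i)}) - u(z^{(i-1)}) \;=\; \bigl(u(z^{(i)}) - \langle u \rangle_{+}\bigr) + \bigl(\langle u \rangle_{+} - \langle u \rangle_{-}\bigr) + \bigl(\langle u \rangle_{-} - u(z^{(i-1)})\bigr),
\]
where $\langle u \rangle_{\pm}$ denotes the average of $u$ over $R_i$ translated to sit around $z^{(i)}$ or $z^{(i-1)}$, respectively. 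The middle term is estimated by integrating $\partial_i u$ along segments in direction $i$ and applying the variable-exponent Hölder inequality in $L^{p_i(\cdot)}$; the outer two terms are deviation-from-mean estimates controlled by the perpendicular partials $\partial_j u$ in $L^{p_j(\cdot)}$. Each application of Hölder's inequality produces a power of $|y_i - x_i|$ together with powers of the $r_j$'s determined pointwise by the exponents.

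The decisive step is then to optimize the choice of the $r_j$'s so as to minimize the bound. The optimal profile is $r_j \sim |y_i - x_i|^{\alpha_{ij}(x)}$ for a suitable exponent depending on the $p_k(x)$'s, and the resulting power of $|y_i - x_i|$ is precisely $\beta_i(x)$ as pinned down by \eqref{eqlem2}, which I presume is the content of a preceding lemma. I expect the main obstacle to be in this optimization phase within a variable-exponent context: moving between norm and modular in $L^{p_i(\cdot)}$ typically requires log-Hölder continuity of $p_i$, which is not assumed here, so one must instead exploit the strict inequality $p_m^- > N$ and work locally with nearly constant exponents on small rectangles. Verifying the uniformity of the constants over $\overline{\Omega}$, and checking that the obtained exponent matches the anisotropic Hölder condition of Definition~\ref{def1}, is where the bulk of the bookkeeping will lie.
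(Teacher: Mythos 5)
Your overall strategy (a Morrey-type averaging argument on boxes adapted to the anisotropy) is the right one, but your decomposition is genuinely different from the paper's. The paper does not telescope along coordinate directions: it proves a single key estimate (Lemma \ref{lem2}) on a cube, comparing $u(x)$ and $u(y)$ \emph{both} to the average of $u$ over one common anisotropic box $\Omega(s)$ with side lengths $s^{1/\beta_i^{-}}$, where $s=\sum_{i}|x_i-y_i|^{\beta_i(x,y)}$, via segments from $x$ (resp.\ $y$) to the points of that box; the global statement then follows from a two-case argument in which pairs with $\sum_i|x_i-y_i|^{\beta_i(x,y)}\geq K$ are handled by the sup-norm bound of Lemma \ref{lem1} (Fan's $C^{0}$ embedding), and close pairs are joined through a single corner point $z$ satisfying $|x_i-z_i|+|z_i-y_i|=|x_i-y_i|$. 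Your telescoping scheme, with a separately optimized box for each one-directional increment, is closer to the classical R{\'a}kosn{\'\i}k/Troisi treatment and would, if completed, produce the bound $\sum_i|x_i-y_i|^{\beta_i}$ directly without introducing the quantity $s$.

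As written, however, the plan has concrete gaps. First, your outer ``deviation-from-mean'' terms average $u$ over an $(N-1)$-dimensional rectangle $R_i$ and are claimed to be controlled by the perpendicular partials $\partial_j u$; but the resulting integrals of $\partial_j u$ then live on a hyperplane of measure zero, and traces of $L^{p_j(\cdot)}(\Omega)$ functions on such slices are not controlled by their $L^{p_j(\cdot)}(\Omega)$ norms. You must thicken $R_i$ to a full-dimensional box, which also brings $\partial_i u$ into the outer terms and changes the exponent bookkeeping. Second, the decisive step --- that optimizing the $r_j$ yields exactly the exponents \eqref{eqlem2} --- is precisely what you ``presume,'' so the quantitative heart of the theorem is missing; moreover, nothing in your scheme yet explains why the increments are measured against $\beta_i(x,y)=\min\{\beta_i(x),\beta_i(y)\}$ as Definition \ref{def1} requires, given that your intermediate points $z^{(i)}$ are neither $x$ nor $y$. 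Third, the optimized boxes have sides $\sim|y_i-x_i|^{\alpha_{ij}}$ and need not fit inside $\Omega$ when $x$ and $y$ are far apart; the paper disposes of that regime with the constant $K=\min\{H,\delta,1\}$ and Lemma \ref{lem1}, and your argument needs an analogous device. Your worry about norm-versus-modular and log-H\"older continuity is legitimate, but note that the paper's own proof of Lemma \ref{lem2} applies H\"older's inequality with the exponent frozen at the point $x$, so that issue is not actually resolved there either.
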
	
	
	As it is well known, the existence of embedding $W^{1,\overrightarrow{p}(\cdot)}(\Omega)\hookrightarrow C^{0,\beta}(\overline{\Omega})$ implies that the solutions in the anisotropic Sobolev space have uniform Hölder continuity $(\beta)$ across all directions in the domain $\Omega$. 
	This  is well-suited for scenarios where the physical phenomenon exhibits uniform smoothness or regularity in all spatial directions (isotropic phenomena). Isotropic materials or processes, where properties are the same in all directions, are well-captured by this embedding. On the other hand, the embedding $W^{1,\overrightarrow{p}(\cdot)}(\Omega)\hookrightarrow C^{0, \overrightarrow{\beta}(\cdot)}(\overline{\Omega})$ allows for different Hölder exponents in different directions, capturing the anisotropic nature of regularity. Each direction can have its own decay rate, providing a more flexible and nuanced description of the function behavior. In many real-world scenarios, physical properties vary anisotropically. For instance, in composite materials or turbulent fluid flows, smoothness can differ significantly along different spatial axes. Therefore, anisotropic Hölder spaces are crucial for accurately representing and modeling features in applied sciences that exhibit varying degrees of regularity in different spatial directions. Now, we give some motivation for this in applied sciences:
	
	\begin{enumerate}
		\item \textbf{Physics - Anisotropic Materials:}
		\begin{itemize}
			\item \textbf{Example:}
			Consider a composite material composed of reinforcing fibers embedded within a matrix. The mechanical properties of such materials, including stiffness and strength, vary depending on the orientation of the fibers relative to applied forces. Anisotropic Hölder spaces play a crucial role in accurately modeling and predicting the material's behavior by accounting for these directional variations. This is essential for engineering applications where precise understanding of material response under different loading conditions is necessary for design optimization and structural integrity. For more detail, we refer the reader to Kar's book \cite{Kar}.
			
		\end{itemize}
		
		\item \textbf{Physics - Fluid Dynamics:}
		\begin{itemize}
			\item \textbf{Example:}Turbulent flows in fluid dynamics display varying degrees of regularity along different spatial directions. Anisotropic Hölder spaces are instrumental in analyzing these flows by capturing the directional variations in velocity field decay rates. This characterization aids in understanding complex flow phenomena, crucial for applications in aerospace, environmental modeling, and industrial processes.
 For further elaboration, we direct the reader to the work of Davidson \cite{Davidson}.
		\end{itemize}
		
		\item \textbf{Image Restoration - Edge Preservation:}
		\begin{itemize}
			\item \textbf{Motivation:} In image processing, edges or boundaries frequently exhibit diverse smoothness characteristics along various orientations. Anisotropic Hölder spaces offer a suitable framework for image restoration algorithms, allowing them to effectively preserve these directional features during denoising or reconstruction processes. This capability is essential for maintaining edge sharpness and structural details, contributing to the enhancement of image quality and accurate feature representation. For further details, one can refer to Moon and Stirling \cite{MoonStirling}, for example.
		\end{itemize}
		
		\item \textbf{Image Restoration - Texture Analysis:}
		\begin{itemize}
			\item \textbf{Motivation:} Textures in images often exhibit diverse regularity along different orientations. Anisotropic Hölder spaces play a crucial role in advancing image processing techniques, facilitating the preservation and analysis of these directional textures. This enables a more accurate representation and enhanced preservation of intricate textural details in various applications, such as medical imaging, remote sensing, and computer vision. For further in-depth exploration of this topic, interested reader can refer to Gonzalez and Woods \cite{GonzalezWoods} and  Ren et al. \cite{RenFangYan}.
		\end{itemize}
		
		\item \textbf{Medical Imaging - Fiber Tractography:}
		\begin{itemize}
			\item \textbf{Example:} In diffusion tensor imaging, which studies the diffusion of water molecules in biological tissues, the orientation of fiber tracts can vary in regularity. Anisotropic Hölder spaces are invaluable for modeling and analyzing these directional variations, providing a framework to accurately characterize the complex structure and organization of biological tissues. This enables a better understanding and interpretation of diffusion properties in applications such as neuroimaging and medical diagnosis.
			For a more comprehensive understanding of this topic, those interested are encouraged to consult Johansen-Berg and Behrens
			\cite{Johansen} and Shung et al. \cite{Shung}.
		\end{itemize}
	\end{enumerate}
	
	In conclusion, we can say
	that
	 the choice between $C^{0,\beta}(\overline{\Omega})$ and $C^{0, \overrightarrow{\beta}(x)}(\overline{\Omega})$ embeddings depends on whether the regularity of the phenomenon is uniform or direction-dependent. Ani\-sotro\-pic Hölder spaces have found particular significance in scenarios where anisotropy is inherent, providing for a more detailed and accurate representation for a wide range of phenomena in applied sciences.
	
	The paper is structured as follows: In Section \ref{prelim},  we introduce the basic concepts and preliminaries necessary for proving our main results. Section \ref{proof} is dedicated to a detailed proof of the main result (Theorem \ref{theo}), highlighting our key contributions. In Section \ref{counterexamples}, we discuss the limits of Theorem \ref{theo} through specific counterexamples. Finally, Section \ref{applications} illustrates the practical applications of our findings within the realm of physical sciences, underlining the real-world relevance of our theoretical work.	

	\section{Preliminaries}\label{prelim}
	For all background materials  we refer to  comprehensive monographs \cite{PRR, radulescu1}.
	To prove the main result established in Theorem \ref{theo}, we shall need to recall the following lemma.
	\begin{lemma}[Fan {\cite[Theorem 2.5]{fananiso}}] \label{lem1}
		Let $\Omega$ be a rectangular domain and 
		$\overrightarrow{p}(\cdot)\in ( C_{+}^{}(\overline{\Omega}))^{N} $ such that $\displaystyle N <p_m(x):=\min_{1\leq i \leq N}\{p_i(x)\}\leq p_M(x):=\max_{1\leq i \leq N}\{p_i(x)\}< \infty$ for every $x\in \overline{\Omega}$.
		Then there exists a continuous embedding 
		\[
		W^{1,\overrightarrow{p}(x)}(\Omega)\hookrightarrow C(\overline{\Omega}).
		\] 	
	\end{lemma}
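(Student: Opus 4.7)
My plan is to reduce this anisotropic, variable-exponent statement to the classical Morrey embedding by passing to a single well-chosen constant exponent $p_0 > N$.

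First, I would exploit compactness of $\overline{\Omega}$ together with continuity of each $p_i$. Since $p_i(x) > N$ pointwise and $p_i \in C(\overline{\Omega})$, we have $p_i^- > N$ for every $i$. Setting $p_0 := \min_{1 \leq i \leq N} p_i^-$ then furnishes a single constant with $N < p_0 \leq p_i(x)$ for every $x \in \overline{\Omega}$ and every $i \in \{1,\dots,N\}$, and in particular $p_0 \leq p_m(x) \leq p_M(x)$.

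Second, I would use that $|\Omega| < \infty$ together with the standard inclusion for variable-exponent Lebesgue spaces: whenever $p_0$ is a constant exponent and $p_0 \leq p(x)$ almost everywhere on a set of finite measure, one has a continuous embedding $L^{p(\cdot)}(\Omega) \hookrightarrow L^{p_0}(\Omega)$, obtained from Hölder's inequality in $L^{p(\cdot)}$ applied to $|u|^{p_0}\cdot 1$. Applying this to $u$ (with exponent $p_M(x)$) and to each $\partial_{x_i} u$ (with exponent $p_i(x)$) produces a constant $C = C(|\Omega|, p_0, \overrightarrow{p})$ with
\[
\|u\|_{W^{1,p_0}(\Omega)} \leq C\,\|u\|_{W^{1,\overrightarrow{p}(x)}(\Omega)}.
\]
This yields a continuous embedding $W^{1,\overrightarrow{p}(x)}(\Omega) \hookrightarrow W^{1,p_0}(\Omega)$.

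Finally, since $\Omega$ is a rectangular domain and therefore Lipschitz, and $p_0 > N$, the classical Morrey embedding applies and gives $W^{1,p_0}(\Omega) \hookrightarrow C^{0, 1 - N/p_0}(\overline{\Omega}) \hookrightarrow C(\overline{\Omega})$. Composing the two continuous embeddings completes the argument. The approach has no genuine obstacle; the only care required is to verify explicitly the variable-to-constant inclusion $L^{p(\cdot)}(\Omega) \hookrightarrow L^{p_0}(\Omega)$ with a controllable constant in terms of $|\Omega|$ and $p_0$, a routine fact from the variable-exponent Hölder inequality recorded in the monographs \cite{PRR, radulescu1} cited for background.
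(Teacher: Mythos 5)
Your argument is correct, but note that the paper does not prove this lemma at all: it is imported verbatim as a citation of Fan \cite[Theorem 2.5]{fananiso}, so there is no in-paper proof to compare against. What you supply is a self-contained and more elementary route: compactness of $\overline{\Omega}$ plus continuity of the $p_i$ gives a single constant $p_0=\min_i p_i^->N$; the standard finite-measure inclusion $L^{p(\cdot)}(\Omega)\hookrightarrow L^{p_0}(\Omega)$ for $p_0\le p(x)$ (valid here since a rectangular domain is bounded) downgrades each component of the anisotropic norm, yielding $W^{1,\overrightarrow{p}(x)}(\Omega)\hookrightarrow W^{1,p_0}(\Omega)$; and classical Morrey embedding on the Lipschitz domain $\Omega$ finishes. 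Each step is sound, and it proves slightly more than claimed (an embedding into $C^{0,1-N/p_0}(\overline{\Omega})$). What this reduction gives up, relative to Fan's theorem, is sharpness: by flattening all exponents to the worst-case constant $p_0$ you lose the direction- and point-dependence of the regularity (Fan's results are phrased in terms of the harmonic mean $\overline{P}(x)$ and also yield compactness statements), which is precisely the refinement the rest of the paper is after in Lemma \ref{lem2} and Theorem \ref{theo}. For the statement as given, however, your proof is complete and arguably preferable to a black-box citation.
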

	We now introduce the definition of the anisotropic space of $\overrightarrow{\beta}(\cdot)$-Hölder-continuous functions.
	\begin{definition} \label{def1}
		Let $\Omega \subset \mathbb{R}^N$ be a bounded domain. Let $\overrightarrow{\beta}(x)=(\beta_1(x),\dots,\beta_N(x)),$ where $0\leq \beta_i(x)\leq 1,$ for $i=1,\dots,N$ and $\beta_i(x)\in C(\Omega)$. We define the anisotropic space $C^{0,\overrightarrow{\beta}(x)}(\overline{\Omega})$ of $\overrightarrow{\beta}(x)$-Hölder-continuous functions as the set of $u\in C(\overline{\Omega})$ with a finite norm given by
		\begin{equation} \label{eqN1}
			\|u\|_{C^{0,\overrightarrow{\beta}(x)}(\overline{\Omega})}
			:= \|u\|_{C(\overline{\Omega})}+ \sup_{x,y\in \Omega,\\ x\neq y } \dfrac{| u(x)-u(y)|}{\sum_{i=1}^{N}|x_i - y_i |^{\beta_i(x,y)}},
		\end{equation}
		where $\beta_i(x,y)= \min\{\beta_i(x),\beta_i(y)\}$.
	\end{definition}
	
	The space $C^{0,\overrightarrow{\beta}(x)}(\overline{\Omega})$ equipped with the norm \eqref{eqN1} forms a Banach space. 	
	
	\begin{lemma} \label{lem2}
		Let $\mathcal{Q}= (O,1)^N$ and
		$\overrightarrow{p}(\cdot)\in ( C_{+}^{}(\overline{\Omega}))^{N} $ such that $\displaystyle N <p_m(x):=\min_{1\leq i \leq N}\{p_i(x)\}\leq p_M(x):=\max_{1\leq i \leq N}\{p_i(x)\}< \infty,$ \  for every $x\in \overline{\Omega}$. Set
		
		\begin{equation} \label{eqlem2}
			\beta_i(x) =  \dfrac{	1 - \sum_{j=1}^{N}\frac{1}{p_j(x)}}{1 - \sum_{j=1}^{N}\frac{1}{p_j(x)}+ \frac{N}{p_1(x)}}
			, ~ i=1,\dots,N,  \text{ for every } x \in \mathcal{Q}.
		\end{equation}

		Then there exists a constant $c>0$ such that 
		
		$$
		| u(x)-u(y)| \leq c \|u\|_{W^{1,\overrightarrow{p}(x)}(\mathcal{Q})}  \sum_{i=1}^{N}|x_i - y_i |^{\beta_i(x,y)},
		$$
		where $\beta_i(x,y)=\min(\beta_i(x),\beta_i(y))$.
	\end{lemma}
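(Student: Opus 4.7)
This is an anisotropic variable-exponent analogue of Morrey's inequality, and our plan is to adapt the classical Morrey--R\'akosn\'{\i}k proof to the present setting. First we would reduce to smooth functions: since $p_{m}>N$, Lemma~\ref{lem1} gives $W^{1,\overrightarrow{p}(x)}(\mathcal{Q})\hookrightarrow C(\overline{\mathcal{Q}})$, and a standard mollification argument shows that $C^{1}(\overline{\mathcal{Q}})\cap W^{1,\overrightarrow{p}(x)}(\mathcal{Q})$ is dense in $W^{1,\overrightarrow{p}(x)}(\mathcal{Q})$, so it suffices to establish the bound for $u\in C^{1}(\overline{\mathcal{Q}})$. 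The central idea is then to introduce an intermediate anisotropic reference box $R=\prod_{i=1}^{N}[a_{i},a_{i}+r_{i}]\subset\mathcal{Q}$ containing both $x$ and $y$, with side lengths $r_{i}$ comparable to suitable powers of $|x_{i}-y_{i}|$, and to split
\[
|u(x)-u(y)|\le|u(x)-u_{R}|+|u_{R}-u(y)|,
\]
where $u_{R}$ denotes the mean value of $u$ on $R$.

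\textbf{Key steps.}
For each of the two terms on the right we would use the axis-parallel telescoping identity
\[
u(z)-u(x)=\sum_{k=1}^{N}\int_{x_{k}}^{z_{k}}\partial_{k}u(z_{1},\dots,z_{k-1},s,x_{k+1},\dots,x_{N})\,ds,\qquad z\in R,
\]
then average over $z\in R$ and invoke Fubini. The resulting mixed integrals can be rewritten, by an additional averaging over the coordinates that were held fixed at $x_{j}$, as integrals of $|\partial_{k}u|$ over the full box $R$. Applying the generalized H\"older inequality in $L^{p_{k}(\cdot)}(R)$ with its conjugate exponent, and using $\|\partial_{k}u\|_{L^{p_{k}(\cdot)}(R)}\le\|u\|_{W^{1,\overrightarrow{p}(x)}(\mathcal{Q})}$, yields a bound of the schematic form
\[
|u(x)-u_{R}|\le C\,\|u\|_{W^{1,\overrightarrow{p}(x)}(\mathcal{Q})}\sum_{k=1}^{N}r_{k}\prod_{j=1}^{N}r_{j}^{-1/p_{j}(\xi_{k})}
\]
for some $\xi_{k}\in\overline{R}$. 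Choosing $r_{i}=|x_{i}-y_{i}|^{\gamma_{i}}$ (up to a harmless multiplicative constant) and then solving the linear algebraic system that forces each summand to collapse to $\sum_{i=1}^{N}|x_{i}-y_{i}|^{\beta_{i}}$ with $\beta_{i}$ as in \eqref{eqlem2} completes the core estimate. The passage from $\beta_{i}(x)$ to $\beta_{i}(x,y)=\min(\beta_{i}(x),\beta_{i}(y))$ is handled by the continuity of $\overrightarrow{p}$: since $x,y\in R$ and $R$ is small, the essential infimum of $p_{j}$ over $R$ is controlled from below by $\min(p_{j}(x),p_{j}(y))$ up to an error absorbed into the final constant.

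\textbf{Main obstacle.}
We expect the delicate points to be (i) the algebraic step of solving for the $\gamma_{i}$ so as to recover precisely the formula \eqref{eqlem2}, which requires carefully tracking how powers of $r_{i}$ accumulate across all $N$ telescoping terms when combined with the mixed H\"older exponents; and (ii) justifying the use of constant-exponent H\"older estimates at each spatial point when $p_{i}(\cdot)$ actually varies. For (ii) the natural device is to localize on $R$, replacing $p_{i}(\cdot)$ by $p_{i}^{-}(R):=\text{ess\,inf}_{R}\,p_{i}$ on one side of H\"older's inequality and by $p_{i}^{+}(R):=\text{ess\,sup}_{R}\,p_{i}$ on the other; then, thanks to the continuity assumption $\overrightarrow{p}\in(C_{+}(\overline{\Omega}))^{N}$, the oscillation $p_{i}^{+}(R)-p_{i}^{-}(R)$ becomes negligible as the diameter of $R$ shrinks, and the corresponding error is swallowed into the constant $c$.
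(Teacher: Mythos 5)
Your overall architecture (compare $u(x)$ and $u(y)$ with the mean value of $u$ over a small anisotropic box whose side lengths are powers of the coordinate increments, then apply H\"older and a triangle inequality) is the same as the paper's, and your remarks on localizing the variable exponent and on the density reduction are fine. However, the engine you propose for estimating $|u(x)-u_R|$ --- the axis-parallel telescoping identity --- has a genuine gap. After averaging that identity over $z\in R$, the $k$-th term is an integral of $|\partial_k u|$ over the set $\{(z_1,\dots,z_{k-1},s,x_{k+1},\dots,x_N)\}$, i.e.\ over a $k$-dimensional slice of $R$ passing through the fixed point $x$. Such a lower-dimensional integral cannot be bounded by $\|\partial_k u\|_{L^{p_k(\cdot)}(R)}$, which is an $N$-dimensional norm; the trace of an $L^{p_k}$ function on a slice is not controlled. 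Your proposed fix --- ``an additional averaging over the coordinates that were held fixed at $x_j$'' --- is not available: those coordinates are pinned at the specific point $x$ whose value $u(x)$ you are estimating, and averaging over them would replace $u(x)$ by an average of $u$ over a slice, changing the quantity to be bounded. (Telescoping is the right tool for the Gagliardo--Nirenberg--Troisi $L^{\overline{P}^\ast}$ embedding, where the product structure rescues the slice integrals, but not for the Morrey-type pointwise estimate.)

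The paper circumvents this by integrating along the straight segment from $x$ to $z$ for \emph{every} $z$ in the box $\Omega(s)$ (a translate of $\prod_i(0,s^{1/\beta_i^-})$ with the single scale $s=\sum_i|x_i-y_i|^{\beta_i(x,y)}$, which also avoids the degeneracy of your choice $r_i=|x_i-y_i|^{\gamma_i}$ when $x_i=y_i$). For each $t\in(0,1]$ the points $x+t(z-x)$ sweep out the full-dimensional set $\Omega(s,t)$ of measure $t^N s^{\sigma}$, so H\"older's inequality can be applied against the genuine $L^{p_i(\cdot)}$ norm of $\partial_{x_i}u$, at the cost of the Jacobian factor $t^{-N}$; the resulting singularity $\int_0^1 t^{-N/p_i}\,dt$ is finite precisely because $p_i>N$. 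This is the one idea your plan is missing, and without it the estimate cannot be closed in terms of $\|u\|_{W^{1,\overrightarrow{p}(x)}(\mathcal{Q})}$. (The paper also treats the regime $\sum_i|x_i-y_i|^{\beta_i(x,y)}\ge 1$ separately via the $C(\overline{\mathcal{Q}})$ embedding of Lemma~\ref{lem1}; you would need the analogous case split, since your box $R$ must fit inside $\mathcal{Q}$.)
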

	
	\begin{proof}
		Let  $u \in C^{\infty}(\overline{\mathcal{Q}})$ and $x, y \in \overline{\mathcal{Q}}$, such that $\displaystyle \sum_{i=1}^{N}|x_i - y_i |^{\beta_i(x,y)}=s$ to apply Lemma \ref{lem1}. We shall consider two cases.\\
		\textbf{Case 1:} $s \geq 1$\par 
		In this case, we directly apply Lemma \ref{lem1}, which ensures that the function $u$ belongs to an anisotropic variable exponent Sobolev space that is continuously embedded into the space of continuous functions. Therefore, the norm of $u$ can be bounded in terms of its anisotropic variable exponent Sobolev norm. Specifically, using the result from Lemma \ref{lem1}, we have:
		$$| u(x) - u(y) | \leq c_1 \| u \|_{W^{1,\overrightarrow{p}(x)}(\mathcal{Q})} \sum_{i=1}^{N} |x_i - y_i|^{\beta_i(x,y)}. $$
		This inequality holds for every $x, y \in \mathcal{Q}$, where $\mathcal{Q} = (0,1)^N$.
		
		\textbf{Case 2:}  $0<s<1$\\
		Here, we shall consider the scenario where $s<1 $.  We introduce a set $\Omega(s)\subset \mathcal{Q}$, which is a translation of a rectangular box $\displaystyle \prod_{i=1}^{N}\Big(0,s^{\frac{1}{\beta_i^{-}}}\Big)$ where $\beta_i^{-} = \text{min} \{\beta_i(x,y) \mid x, y \in \overline{\mathcal{Q}}\}$ . The choice of $\Omega(s)$ ensures that $x, y \in \Omega(s)$. \par
		
		Now, for $z \in \Omega(s)$, By the fundamental theorem of calculus and the triangle inequality, we have
		\[
		| u(x)- u(z)|=\left| \int_{0}^{1} \frac{d}{dt} u(x + t(z-x)) \, dt \right| \leq \int_{0}^{1} \left| \frac{d}{dt} u(x + t(z-x)) \right| \, dt.
		\]
		Using the chain rule, the derivative $\frac{d}{dt} u(x + t(z-x))$ can be expressed as
		
		\[
		\frac{d}{dt} u(x + t(z-x)) = \sum_{i=1}^{N} \partial_{x_i} u(x + t(z-x)) \cdot \partial_{t}(x + t(z-x))_i,
		\]		
		where
		 $\partial_{x_i} u(x + t(z-x))$ represents the partial derivative of $u$ with respect to the \(i\)th component evaluated at $x + t(z-x)$, and $\partial_{t}(x + t(z-x))_i$ is the \(i\)th component of the vector $(x + t(z-x))$. Note that $\partial_{t}(x + t(z-x))_i = z_i - x_i$. Substituting this expression back into the integral, we obtain
		\[
		| u(x)- u(z)| \leq \sum_{i=1}^{N} \int_{0}^{1} \left| \partial_{x_i} u(x + t(z-x)) \right| \cdot |z_i - x_i|\, dt.
		\]
		Since $z \in \Omega(s)$, we have $|z_i - x_i| \leq s^{\frac{1}{\beta_i^{-}}}$ for every $i$. Therefore, we obtain
		$$
		| u(x)- u(z)| \leq \sum_{i=1}^{N} s^{\frac{1}{\beta_i^{-}}} \int_{0}^{1} \Big|\partial_{x_i} u(x + t(z-x)) \Big|
		dt. $$
		Denote $\displaystyle \sum_{i=1}^{N}\frac{1}{\beta_i^{-}}=\sigma$ we have
		$\text{meas}(\Omega(s))= s^{\sigma}$,
		so that
		\begin{align}\label{inq1}
			\begin{aligned}
				| u(x) - s^{-\sigma}	\int_{\Omega(s)} u(z)dz|&\leq		 s^{-\sigma}	\int_{\Omega(s)} |u(x) -u(z)|dz\\
				&\leq \sum_{i=1}^{N} s^{ -\sigma + \frac{1}{\beta_i^{-}}}
				\int_{\Omega(s)}\int_{0}^{1}\Big|\partial_{x_i} u(x + t(z-x))\Big|
				dt dz.
			\end{aligned}
		\end{align}
		Setting $\Omega(s,t)= \{ x + t(z-x); z \in \Omega(s)\}$, we have $\text{meas}(\Omega(s,t))= t^N s^\sigma$. Applying Fubini's theorem successively, along with the substitution $z\to x+t(z-x)$ and Hölder's inequality in \eqref{inq1}, yields
		\begin{align*}
			| u(x) - s^{-\sigma}	\int_{\Omega(s)} u(z)dz|&\leq 
			\sum_{i=1}^{N} s^{ -\sigma+ \frac{1}{\beta_i^{-}}}
			\int_{0}^{1} t^{-N}
			\int_{\Omega(s,t)}|\partial_{x_i} u(z)|
			dzdt\\
			&\leq \sum_{i=1}^{N} s^{ -\sigma + \frac{1}{\beta_i^{-}}}
			\|\partial_{x_i} u\|_{L^{p_i(x)}(\mathcal{Q})}
			\int_{0}^{1} t^{-N} \Big(\text{meas}(\Omega(s,t))\Big)^{1-\frac{1}{p_i(x)}}dt\\
			&\leq \sum_{i=1}^{N} s^{\frac{1}{\beta_i^{-}}- \frac{\sigma}{p_i(x)} }
			\|\partial_{x_i} u\|_{L^{p_i(x)}(\mathcal{Q})}
			\int_{0}^{1} t^{-\frac{N}{p_i(x)}}dt\\
			&\leq  \sum_{i=1}^{N}s
			\|\partial_{x_i} u\|_{L^{p_i(x)}(\mathcal{Q})}
			\int_{0}^{1} t^{-\frac{N}{p_i(x)}}dt.
		\end{align*}
		If we use the fact $N< p_i(x)$ for every $x\in \Omega$  and apply the same procedure to $y$, we obtain
		\begin{align*}
			| u(x) -  u(y)|&\leq 2 c_2 s \sum_{i=1}^N 	\|\partial_{x_i} u\|_{L^{p_i(x)}(\mathcal{Q})} \leq
			2c_2 	\| u\|_{W^{1,\overrightarrow{p}(x)}(\mathcal{Q})} s.
		\end{align*} 
		This completes the proof of Lemma~\ref{lem2}.
	\end{proof}
	\section{Proof of Theorem~\ref{theo}}\label{proof}	
	
	\begin{proof}
		For any $u \in C^{\infty}(\overline{\Omega})$, by Lemma \ref{lem1}, there exists a constant $c_1$
		such  that 
		\begin{equation} \label{eqt1}
			\|u\|_{C(\overline{\Omega})} \leq c_1\|u\|_{W^{1,\overrightarrow{p}(x)}(\Omega)}.
		\end{equation}
		
		Since  $\Omega$ is a rectangular domain in $\mathbb{R}^N$, 
		it follows that for every $x\in \Omega,$ there is a closed cube $\mathcal{Q}$ with edge length $H$ whose edges are parallel to the coordinate axes and one vertex is at the origin, such that $y+\mathcal{Q}\subset \Omega$ for every $y\in \Omega$, $|y-x|<\delta$.
		
		Now, let  $u \in C^{\infty}(\overline{\Omega})$ and $x, y\in \Omega$. Set 
		\begin{equation}\label{key1}
		\displaystyle	K=\min\{H,\delta,1\}.
		\end{equation}
		
		If $\sum_{i=1}^N |x_i -y_i |^{\beta_i(x,y)} \geq K$, then \eqref{eqt1} yields
		\begin{align}\label{eqt2}
			\begin{aligned}
				| u(x) - u(y) |&\leq |u(x) | + | u(y)|\leq 2c_1 \|u\|_{W^{1,\overrightarrow{p}(x)}(\Omega)}\\
				&\leq 
				2c_1 K^{-1}\|u\|_{W^{1,\overrightarrow{p}(x)}(\Omega)}\sum_{i=1}^{N}|x_i -y_i |^{\beta_i(x,y)}.
			\end{aligned}
		\end{align}
		Suppose now that $\displaystyle \sum_{i=1}^{N}|x_i -y_i |^{\beta_i(x,y)} <K$. Then by \eqref{key1}, we have
		$$
		|x-y|\leq \sum_{i=1}^{N}|x_i-y_i|^{\beta_i(x,y)}< \delta
		$$
		and there exists a cube $\mathcal{Q}$ described above. In particular,  $x+\mathcal{Q}\subset \Omega$, $y+\mathcal{Q}\subset \Omega$. There exist numbers $t_1,\dots,t_n\in \{-1,1\}$ 
		such that
		$$ \displaystyle \mathcal{Q} =\{ z; 0\leq t_iz_i \leq H \text{ for } i=1,\dots,N\}.$$
		Set $\displaystyle z_i =t_i \max(t_ix_i, t_iy_i)$, $z=(z_1,\dots,z_n)$. Since $|x-y|<K\leq H$, we have $z\in (x+\mathcal{Q})\cap (y+\mathcal{Q})$ and 
		
		$$
		|x_i-z_i|+ |z_i-y_i|= |x_i-y_i|, ~~ i=1,\dots,N.
		$$
		Applying Lemma \ref{lem2} to the cubes $x+\mathcal{Q}$, $y+\mathcal{Q}$, we obtain
		
		\begin{align*}
			| u(x) - u(y) |&\leq 	| u(x) - u(z) |+ 	| u(z) - u(y) |\\
			&\leq  c_2\|u\|_{W^{1,\overrightarrow{p}(x)}(\Omega)} \Big(
			\sum_{i=1}^{N}|x_i -z_i |^{\beta_i(x,y)}
			+ \sum_{i=1}^{N}|z_i -y_i |^{\beta_i(x,y)}  \Big)\\
			&\leq  c_3\|u\|_{W^{1,\overrightarrow{p}(x)}(\Omega)}
			\sum_{i=1}^{N}|x_i -y_i |^{\beta_i(x,y)},
		\end{align*}
		where $\displaystyle c_3= c_2\max_{1\leq i \leq N}2^{1-\beta_i(x,y)}$. 
		Setting $C= \max\{2c_1K^{-1},c_3\}$  we obtain 
		$$\|u\|_{W^{1,\overrightarrow{\beta}(x)}(\overline{\Omega})}
		\leq C\|u\|_{W^{1,\overrightarrow{p}(x)}(\Omega)}.$$
		This completes the proof of Theorem \ref{theo}.
	\end{proof}
	
 Next, we present two corollaries of Theorem \ref{theo}. The first one reveals that with a uniform Sobolev exponent, the embedding simplifies to isotropic Hölder continuity. The second highlights that higher Sobolev exponents lead to increased Hölder continuity, indicating enhanced function regularity within the domain.  These insights bridge the gap in understanding between anisotropic Sobolev spaces and their isotropic Hölder counterparts, emphasizing the impact of Sobolev exponents on the regularity of embedded functions.	
	
	\begin{corollary}  \label{cor1}
	If $ \overrightarrow{p}(x) = (p(x), \dots, p(x))$ (uniform for all directions), then \( W^{1,\overrightarrow{p}(x)}(\Omega) \hookrightarrow C^{0,\beta(x)}(\overline{\Omega}) \), where $ \beta(x) =1 - \frac{N}{p(x)}$  is a scalar representing the isotropic Hölder continuity exponent.
	\end{corollary}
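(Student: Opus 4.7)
My plan is to deduce the corollary directly from Theorem~\ref{theo} by specializing the vector exponent and then showing that, in the isotropic case, the anisotropic H\"older seminorm controls the standard variable-exponent H\"older seminorm. The whole argument is therefore a two-step reduction: first a pointwise computation of the exponents $\beta_i(x)$ produced by formula~\eqref{eqlem2}, and second an elementary comparison between $\sum_i |x_i-y_i|^{\beta(x,y)}$ and $|x-y|^{\beta(x,y)}$.

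For the first step I would substitute $p_j(x)=p(x)$ for every $j=1,\dots,N$ into \eqref{eqlem2}. The numerator becomes $1-\tfrac{N}{p(x)}$ and the denominator becomes $1-\tfrac{N}{p(x)}+\tfrac{N}{p(x)}=1$, so that all components collapse to the single scalar $\beta_i(x)=1-\tfrac{N}{p(x)}=:\beta(x)$, which is exactly the exponent claimed in the corollary. Observe that the hypothesis $N<p_m(x)=p(x)$ of Theorem~\ref{theo} is precisely what guarantees $\beta(x)\in(0,1)$, so the target space $C^{0,\beta(x)}(\overline{\Omega})$ is well defined and Theorem~\ref{theo} applies to give
\[
W^{1,\overrightarrow{p}(x)}(\Omega)\hookrightarrow C^{0,\overrightarrow{\beta}(x)}(\overline{\Omega})
\quad\text{with } \overrightarrow{\beta}(x)=(\beta(x),\dots,\beta(x)).
\]

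For the second step I would show the continuous inclusion $C^{0,\overrightarrow{\beta}(x)}(\overline{\Omega})\hookrightarrow C^{0,\beta(x)}(\overline{\Omega})$, where on the right the denominator of the seminorm is $|x-y|^{\beta(x,y)}$ with $\beta(x,y)=\min\{\beta(x),\beta(y)\}$. Since $|x_i-y_i|\le |x-y|$ for each coordinate and $\beta(x,y)\in[0,1]$, monotonicity of $t\mapsto t^{\beta(x,y)}$ yields $|x_i-y_i|^{\beta(x,y)}\le |x-y|^{\beta(x,y)}$, and summing over $i=1,\dots,N$ gives
\[
\sum_{i=1}^{N}|x_i-y_i|^{\beta(x,y)}\le N\,|x-y|^{\beta(x,y)}.
\]
Dividing the numerator $|u(x)-u(y)|$ by the smaller quantity $\sum_i |x_i-y_i|^{\beta(x,y)}$ and multiplying by $N$ therefore bounds the isotropic H\"older seminorm by $N$ times the anisotropic one. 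Combining with the sup-norm bound already contained in Definition~\ref{def1} produces a continuous embedding with constant at most $N$.

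Chaining the two embeddings then yields the desired continuous inclusion $W^{1,\overrightarrow{p}(x)}(\Omega)\hookrightarrow C^{0,\beta(x)}(\overline{\Omega})$. I do not expect any real obstacle here: the only slightly delicate point is confirming that the natural scalar variable-exponent H\"older space used in the statement is the one obtained by collapsing Definition~\ref{def1} to a single exponent, so that the comparison of denominators is legitimate; this is a definitional check rather than an analytical difficulty.
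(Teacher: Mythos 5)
Your proposal is correct and follows essentially the same route as the paper: substitute $p_j(x)=p(x)$ into \eqref{eqlem2}, observe that the denominator collapses to $1$ so that every $\beta_i(x)=1-\tfrac{N}{p(x)}$, and then invoke Theorem~\ref{theo}. The only difference is that your second step — bounding the isotropic seminorm with denominator $|x-y|^{\beta(x,y)}$ by $N$ times the anisotropic seminorm via $\sum_{i=1}^{N}|x_i-y_i|^{\beta(x,y)}\le N\,|x-y|^{\beta(x,y)}$ — makes explicit a comparison that the paper merely asserts when it says $C^{0,\overrightarrow{\beta}(x)}(\overline{\Omega})$ ``simplifies to'' $C^{0,\beta(x)}(\overline{\Omega})$; this is a welcome tightening rather than a different argument.
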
 
	\begin{proof}
	Assume that $ \overrightarrow{p}(x) = (p(x), \dots, p(x))$ for every $ x \in \Omega $, the anisotropic nature vanishes, and the space $ W^{1,\overrightarrow{p}(x)}(\Omega)$ reduces to $ W^{1,p(x)}(\Omega)$. In this case, the anisotropic Hölder exponents  $\overrightarrow{\beta}(x)$ simplify as follows:

	$$\beta_i(x) = \frac{1 - N/p(x)}{1 - N/p(x) + N/p(x)} = 1 - N/p(x),$$
	for every $i = 1, \dots, N$,
	 since all $ p_i(x)$ are equal and the anisotropic terms cancel out. Therefore, the Hölder exponent $ \beta(x)$ is uniform in all directions, and the space $ C^{0,\overrightarrow{\beta}(x)}(\overline{\Omega})$ simplifies to the isotropic Hölder space $ C^{0,\beta(x)}(\overline{\Omega})$.
	
	Thus, the embedding theorem for anisotropic spaces simplifies to the classical embedding theorem for isotropic spaces, where $ W^{1,p(x)}(\Omega)$ is embedded into $ C^{0,\beta(x)}(\overline{\Omega})$ with  $\beta(x) = 1 - N/p(x)$, as long as $p(x) > N$.	\end{proof}
\begin{corollary}
	If $ p_i(x) > N $ for every $i$ in  $ \overrightarrow{p}(x) = (p_1(x), p_2(x), \dots, p_N(x))$, then the Hölder continuity exponent $ \overrightarrow{\beta}(x)$ increases, indicating higher regularity in $C^{0,\overrightarrow{\beta}(x)}(\overline{\Omega})$.
\end{corollary}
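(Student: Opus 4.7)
The plan is to verify that under the hypothesis $p_i(x) > N$ for every $i$, the formula \eqref{eqlem2} produces strictly positive Hölder exponents, that each $\beta_i(x)$ depends monotonically on each $p_j(x)$, and that this monotonicity, combined with Theorem~\ref{theo}, delivers the claimed improvement in regularity.

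First, I would set $A(x) := 1 - \sum_{j=1}^{N}\tfrac{1}{p_j(x)}$ and $B(x) := N/p_1(x)$, and observe that the hypothesis $p_i(x)>N$ for every $i$ forces $\sum_{j=1}^{N}\tfrac{1}{p_j(x)} < 1$, so $A(x) > 0$ and $\beta_i(x) = A(x)/(A(x)+B(x)) \in (0,1)$. In particular, the assumptions of Theorem~\ref{theo} are met, so the embedding $W^{1,\overrightarrow{p}(x)}(\Omega)\hookrightarrow C^{0,\overrightarrow{\beta}(x)}(\overline{\Omega})$ is in force.

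Next, I would establish strict monotonicity of $\beta_i$ in each $p_j(x)$. Viewing $\beta_i = A/(A+B)$, one computes $\partial \beta_i / \partial A = B/(A+B)^2 > 0$. For $j \neq 1$, only $A$ depends on $p_j$ and $\partial A / \partial p_j = 1/p_j^2 > 0$, giving the claim. For $j=1$, both $A$ and $B$ depend on $p_1$, with $\partial A/\partial p_1 = 1/p_1^2$ and $\partial B/\partial p_1 = -N/p_1^2$; the quotient rule then produces the numerator $A'B - AB' = N/p_1^3 + AN/p_1^2 > 0$, so $\beta_i$ increases strictly with $p_1$ as well. Consequently, enlarging any component of $\overrightarrow{p}(x)$ above $N$ strictly enlarges every $\beta_i(x)$.

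Finally, to turn increased exponents into improved regularity, I would invoke the inclusion $C^{0,\overrightarrow{\gamma}(x)}(\overline{\Omega}) \hookrightarrow C^{0,\overrightarrow{\beta}(x)}(\overline{\Omega})$ whenever $\beta_i(x) \leq \gamma_i(x)$ pointwise on $\overline{\Omega}$, which follows from \eqref{eqN1} by comparing $|x_i-y_i|^{\gamma_i(x,y)}$ with $|x_i-y_i|^{\beta_i(x,y)}$ and absorbing the discrepancy (split according to whether $|x_i-y_i| \leq 1$ or not) into a constant that depends only on $\mathrm{diam}(\overline{\Omega})$ and on $\beta_i^{-}, \gamma_i^{+}$. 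Chaining this inclusion with Theorem~\ref{theo} yields the improved embedding. The main subtlety is not technical but interpretive: formalizing what "$\overrightarrow{\beta}(x)$ increases" means, which the monotonicity calculation above resolves by establishing the pointwise increase of each $\beta_i$ in each $p_j$.
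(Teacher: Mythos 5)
Your proposal is correct, and it follows the same basic line as the paper's proof (positivity of the numerator $1-\sum_j 1/p_j(x)$ from the hypothesis $p_i(x)>N$, then tracking how $\beta_i$ responds to growth of the $p_j$), but you carry it out more carefully in two respects. Where the paper argues informally that "as $p_i(x)$ increases significantly beyond $N$, $N/p_i(x)$ approaches $0$ and hence $\beta_i(x)$ approaches $1$," you prove genuine pointwise strict monotonicity of $\beta_i=A/(A+B)$ in every component $p_j$, including the coupled case where both $A$ and $B$ depend on the same variable (your quotient-rule numerator $A'B-AB'=N/p_1^3+AN/p_1^2>0$ is correct). You also supply the step the paper omits entirely: converting "larger exponents" into "better regularity" via the nested embedding $C^{0,\overrightarrow{\gamma}(x)}(\overline{\Omega})\hookrightarrow C^{0,\overrightarrow{\beta}(x)}(\overline{\Omega})$ for $\beta_i\le\gamma_i$, with the standard split at $|x_i-y_i|=1$ on a bounded domain. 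One point worth flagging: you took $B(x)=N/p_1(x)$, which matches \eqref{eqlem2} as printed, whereas the paper's corollary proof quotes the formula with $N/p_i(x)$ in the denominator (an internal inconsistency of the paper, with the $p_i$ version almost certainly intended). Your case analysis ($j\ne 1$ versus $j=1$) transfers verbatim to either reading, so your argument is robust to this discrepancy.
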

\begin{proof}
	
	Given $ \overrightarrow{p}(x) = (p_1(x), \dots, p_N(x))$ with $ p_i(x) > N $ for every  $ i \in \{1, \dots, N\} $ and  $ x \in \Omega $, we consider the embedding $ W^{1,\overrightarrow{p}(x)}(\Omega) \hookrightarrow C^{0,\overrightarrow{\beta}(x)}(\overline{\Omega})$ as established earlier.
	
	The Hölder exponents $ \overrightarrow{\beta}(x)$ in each direction are determined by the formula provided in the following theorem:
	
	$$
	\beta_i(x) = \frac{1 - \sum_{j=1}^{N}\frac{1}{p_j(x)}}{1 - \sum_{j=1}^{N}\frac{1}{p_j(x)} + \frac{N}{p_i(x)}}, \quad i = 1, \dots, N.
	$$
	
	Since $ p_i(x) > N $ for every $ i $, we have $ \frac{1}{p_i(x)} < \frac{1}{N} $. Therefore,
	
	$$
	\sum_{j=1}^{N}\frac{1}{p_j(x)} < \sum_{j=1}^{N}\frac{1}{N} = 1,
	$$
	
	which implies that \( 1 - \sum_{j=1}^{N}\frac{1}{p_j(x)} > 0 \).
	
	As $p_i(x)$ increases significantly beyond $ N $, $ \frac{N}{p_i(x)} $ approaches 0, and hence $ \beta_i(x)$ approaches 1, which is the maximum value for a Hölder exponent indicating Lipschitz continuity.
	
	Therefore, for higher exponents $p_i(x)$, the Hölder exponent $ \beta_i(x)$ increases, leading to improved regularity in the embedding. Specifically, functions in $ W^{1,\overrightarrow{p}(x)}(\Omega)$ exhibit higher regularity in the sense of $\overrightarrow{\beta}(\cdot)$-Hölder continuity, demonstrating enhanced smoothness in such anisotropic Sobolev spaces.
\end{proof}	
	
\section{Counterexamples for Domains with General Shapes} \label{counterexamples}

This section delves into the intricacies of extending the embeddings from anisotropic Sobolev spaces $W^{1,\overrightarrow{p}(x)}(\Omega)$ to spaces of anisotropic Hölder-continuous functions $C^{0, \overrightarrow{\beta}(x)}(\overline{\Omega})$ within domains that do not adhere to the rectangular constraint as stipulated in Theorem \ref{theo}. We explore the domain $\Omega$ within $\mathbb{R}^2$ and scrutinize a function $u(x,y)$ to shed light on the profound challenges that emerge when dealing with non-rectangular, irregularly shaped domains, particularly those exhibiting cusps or pronounced features.

\subsection{Counterexample 1: Domain with a Mild Cusp}

Consider a domain $\Omega$ within $\mathbb{R}^2$ characterized by a cusp, defined as $\Omega = \{(x,y) \in \mathbb{R}^2 : x^2 < y < 2x^2, 0 < x < 1\}$. For the sake of illustration, we adopt a simplified scenario where $\overrightarrow{p}(x) = (p(x), p(x))$ with $p(x) = 4$ being a constant function. The unique and irregular geometry of $\Omega$, particularly the cusp at the origin, introduces significant complications for the Sobolev embedding. The conventional methodologies employed to extend functions beyond $\Omega$ or to approximate functions within $W^{1,p(x)}(\Omega)$ using smooth functions are rendered highly intricate. In this context, the Sobolev embedding theorem might suggest that functions in $W^{1,4}(\Omega)$ should naturally embed into some Hölder space. Nonetheless, the vicinity of the cusp witnesses erratic behaviors in functions and their derivatives due to the sharp geometric turn. For instance, contemplate a function $u$ within $\Omega$ that follows $u(x,y) = \sqrt{x}$ along a trajectory nearing the cusp. Despite $u$ being part of $L^4(\Omega)$ and its derivative $\partial_x u = \frac{1}{2\sqrt{x}}$ lying in $L^4(\Omega)$ due to the proximity to the cusp, $u$ fails to display uniform Hölder continuity across the domain because of the singularity at the origin.

\subsection{Counterexample 2: Domain with a Pronounced Cusp}

Enhancing the complexity, let's examine a domain $\Omega$ delineated by a more pronounced cusp than in the initial example: $\Omega = \{ (x,y) \in \mathbb{R}^2 : y > |x|^\alpha, -1 < x < 1, 0 < y < 1 \}$, with $\alpha > 1$ to intensify the cusp at the origin, thus making it more conspicuous. The exponent vector function $\overrightarrow{p}(x) = (p_1(x,y), p_2(x,y))$ is defined as $p_1(x,y) = 2 + \sin(\pi x)$ and $p_2(x,y) = 2 + \cos(\pi y)$, introducing a spatial variation in the Sobolev exponents and rendering the space both anisotropic and variable-exponent. This setting ensures that the exponents stay within a conventional range for Sobolev spaces, thus evading trivial scenarios.

The function $u: \Omega \rightarrow \mathbb{R}$ is defined as $u(x,y) = \sqrt{y - |x|^\alpha}$. This function remains smooth across $\Omega$ except at the cusp and is crafted to exhibit the irregular behavior near the cusp. The first-order partial derivatives are $\partial_{x} u = -\frac{\alpha x |x|^{\alpha-2}}{2\sqrt{y - |x|^\alpha}}$ and $\partial_{y}u = \frac{1}{2\sqrt{y - |x|^\alpha}}$. Considering $p_M(x) \leq 3$ for every $(x,y) \in \Omega$ and the bounded nature of $\Omega$, $u(x,y)$ remains bounded within $\Omega$, implying that $u(x,y)^{p_M(x)}$ is also bounded and integrable over $\Omega$, signifying $u \in L^{p_M(x)}(\Omega)$.

Assessing whether $\partial_{x}u\in L^{p_1(x,y)}(\Omega)$ and $\partial_{y}u \in L^{p_2(x,y)}(\Omega)$ presents a nuanced challenge. Near the cusp $(x,|x|^\alpha)$, $\partial_{x}u$ behaves akin to $x^{(\alpha-3)/2}$, which is integrable over $\Omega$ given
that  $\alpha > 1$ and the domain's restriction of $x$ to the finite interval $(-1, 1)$. Consequently, $\partial_{x}u$ resides within $L^{p_1(x,y)}(\Omega)$ since $p_1(x,y) \geq 2$. Conversely, $\partial_{y}u$, being less singular, is also integrable over the bounded domain $\Omega$, ensuring its inclusion in $L^{p_2(x,y)}(\Omega)$.

The crux of the matter lies in the anisotropic Hölder continuity of $u(x,y) = \sqrt{y - |x|^\alpha}$, especially near the cusp along $y = |x|^\alpha$. As $(x,y)$ gravitates toward the origin, the derivatives $\partial_{x}u$ and $\partial_{x}u$ become unbounded, underscoring a failure to adhere to the uniform rate of change requisite for Hölder continuity. This issue is particularly pronounced near the cusp, where the path-dependent approach of $u$ toward 0 complicates the determination of a consistent Hölder constant $K$. Therefore, $u$ may not satisfy the criteria for anisotropic Hölder continuity in $C^{0, \overrightarrow{\beta}(x)}(\overline{\Omega})$, especially in the vicinity of singularities such as cusps.

These counterexamples highlight the significant challenges and limitations in applying the embedding theorem (Theorem \ref{theo}) to domains with non-standard boundaries. They emphasize the need for careful consideration and further research in this area. Consequently, we conclude this section by posing an important question: How can the embedding results be generalized or adapted to include more complex, non-rectangular domains, especially those featuring irregular boundaries or internal cusps?	
	
 \section{Applications of Anisotropic Sobolev Embeddings in Physical Sciences}\label{applications}
	
This section delves into two distinct applications of the embeddings from anisotropic Sobolev spaces to anisotropic Hölder spaces, as elucidated by Theorem \ref{theo}. These applications are situated in the realms of heat conduction in composite materials and turbulent fluid flow in anisotropic porous media, showcasing the theorem's versatility in analyzing physical phenomena characterized by directional dependencies.

\subsection{Application 1: Heat Conduction in Composite Materials} 

Composite materials are distinguished by their anisotropic properties, resulting from the combination of two or more constituent materials with different physical characteristics. This anisotropy introduces directional dependencies in properties like thermal conductivity, complicating the analysis of heat conduction. To explore this, we consider a rectangular domain $\Omega = [0,1] \times [0,2] \subset \mathbb{R}^2$, symbolizing a cross-section of a composite material. The domain's anisotropic thermal conductivity is depicted by the exponent vector $\overrightarrow{p}(x) = (3 + \sin(\pi x), 2 + 0.5\cos(\pi y))$, reflecting the material's layered structure or fiber orientations with spatial variations.

The temperature distribution within the composite is modeled by $u(x,y) = \sin(\pi x) \cdot \exp(-y)$, where $\sin(\pi x)$ might indicate periodic variations in temperature due to heterogeneity along the $x$-direction, while $\exp(-y)$ captures a temperature gradient along the $y$-direction, perhaps due to cooling effects from a boundary. Leveraging Theorem \ref{theo} enables quantification of the temperature distribution's smoothness across different directions in the composite material, a critical aspect for designing materials with specific thermal properties and for predicting heat flow behavior in composite structures. This analysis is invaluable in fields such as aerospace and automotive engineering, where tailored thermal management is crucial.

\subsection{Application 2: Turbulent Fluid Flow in Anisotropic Porous Media}

Turbulent fluid flow through anisotropic porous media plays a critical role in various environmental and industrial scenarios, including groundwater movement through aquifers and oil extraction from geological reservoirs. The inherent anisotropy of these media, characterized by heterogeneous pore sizes, shapes, and orientations, exerts a profound influence on the dynamics of fluid flow. To model such a scenario, consider the domain $\Omega = [0,1] \times [0,3] \subset \mathbb{R}^2$, which represents a vertical cross-section of anisotropic porous media, encapsulating the complexities of vertical stratification and horizontal variability in permeability.

The exponent vector $\overrightarrow{p}(x) = (4 - 0.1x, 3 + 0.2y)$ is chosen to reflect the nuanced variations in flow permeability within the media. The term $4 - 0.1x$ signifies a gradual decrease in horizontal permeability, perhaps due to the accumulation of finer sediments or the presence of less permeable layers. Conversely, $3 + 0.2y$ captures the increase in vertical permeability, a common feature in stratified deposits where layers of varying permeability influence the vertical flow dynamics.

To depict the turbulent flow's horizontal velocity component within this medium, the function $u(x,y) = \sin(2\pi x) \cdot y^2 e^{-y}$ is employed. The oscillatory behavior introduced by $\sin(2\pi x)$ mimics the turbulence-induced velocity fluctuations, while the term $y^2 e^{-y}$ models the amplification and subsequent attenuation of the flow with depth, capturing the essence of flow dynamics in real-world porous structures.

The application of Theorem \ref{theo} in this context is invaluable, offering a rigorous framework to analyze the anisotropic regularity of the velocity distribution within the porous media. Such an analytical approach is indispensable for accurately predicting fluid flow patterns in anisotropic environments, a key factor in optimizing groundwater management strategies, enhancing oil recovery techniques, and designing efficient environmental remediation processes. This detailed understanding of fluid behavior in complex media not only aids in addressing practical challenges but also contributes to the theoretical advancement in the study of fluid dynamics in porous and anisotropic materials.

These applications underscore the significance of Theorem \ref{theo} in providing a rigorous mathematical framework to study and quantify anisotropic regularity in physical phenomena, offering valuable insights into their behavior and properties across different directions.	

\subsection*{Acknowledgements}
The authors thank the referees for comments and suggestions.
	
\subsection*{Author contributions}
All authors contributed to the study conception, design, material
preparation, data collection, and analysis. All authors read and
approved the final manuscript.

\subsection*{Conflict of interest}
The authors state that there is no conflict of interest.

\subsection*{Ethical approval}
The conducted research is not related to either human or animal use.

\subsection*{Data availability statement} Data sharing is not applicable to this article as no data sets were generated or
analyzed during the current study.
	
\subsection*{Funding}  	
The second author acknowledges the funding received from the Slovenian Research and Innovation Agency grants P1-0292, J1-4031, J1-4001, N1-0278, N1-0114, and N1-0083.

\end{document}